\theoremstyle{plain}
\newtheorem{theorem}{Theorem}[section]
\newtheorem{corollary}[theorem]{Corollary}
\newtheorem{lemma}[theorem]{Lemma}
\newtheorem{proposition}[theorem]{Proposition}
\newtheorem*{theorem-no-label}{Theorem}
\theoremstyle{definition}
\newtheorem{example}[theorem]{Example}
\newtheorem{remark}[theorem]{Remark}
\DeclareMathOperator{\rk}{rk}
\newcommand{\Chow}{\operatorname{H}}
\newcommand{\Chowaug}{\operatorname{H}^{\operatorname{aug}}}
\DeclareMathOperator{\expsi}{ex\Psi}
\DeclareMathOperator{\expsitilde}{ex\widetilde{\Psi}}
\newcommand{\rank}{\operatorname{rk}}
\newcommand{\zero}{\widehat{0}}
\newcommand{\one}{\widehat{1}}
\newcommand{\aaa}{\mathbf{a}}
\newcommand{\bbb}{\mathbf{b}}
\newcommand{\ZZ}{\mathbb{Z}}
\newcommand{\Dfn}[1]{\emph{\bfseries #1}}
\newcommand{\wt}{{\sf wt}}
\newcommand{\cC}{\mathcal{C}}
\newcommand{\Poin}{\mathcal{\sf Poin}}
\title{Extending the $\aaa\bbb$-index}
\author[E.~Hoster]{Elena Hoster}
\address[E.~Hoster \& C.~Stump]{Ruhr-Universit\"at Bochum, Germany}
\email{\{elena.hoster,christian.stump\}@rub.de}
\author[C.~Stump]{Christian Stump}
\author[L.~Vecchi]{Lorenzo Vecchi}
\address[L.~Vecchi]{Department of Mathematics, KTH Royal Institute of Technology, Sweden}
\email{lvecchi@kth.se}
\date{\today}
\begin{document}

\begin{abstract}
  We prove for finite, graded, bounded posets, that the Poincaré-extended $\aaa\bbb$-index is obtained from the $\aaa\bbb$-index via the $\omega$-transformation.
  This proves a conjecture by Dorpalen-Barry, Maglione, and the second author, and provides a more conceptual approach to $\aaa\bbb$-indices and Chow polynomials beyond $R$-labeled posets.
\end{abstract}

\maketitle

\section{Introduction}

The (Poincaré-)extended $\aaa\bbb$-index of a finite, graded, bounded poset was introduced by Dorpalen-Barry, Maglione, and the second author in~\cite{poincareextended}, enriching the classical $\aaa\bbb$-index with the Poincaré polynomial.
They showed that this polynomial has nonnegative coefficients whenever the poset admits an $R$-labeling.
They then used this to prove a conjecture by Maglione and Voll that the coarse flag Hilbert--Poincaré series has nonnegative numerator polynomial~\cite{MaglioneVoll}.
The second author then showed that the extended $\aaa\bbb$-index also specializes to the Chow polynomial, providing a combinatorial approach to the $\gamma$-positivity of the latter for $R$-labeled posets~\cite{Stump}.
All three authors in different combinations, and also in collaboration with Brändén, used this to give explicit descriptions of the Chow polynomial for uniform matroids and totally nonnegative posets~\cite{2024arXiv241022329H,brandenvecchi} and provide real-rootedness for uniform matroids and for simplicial posets~\cite{2025arXiv250107364B,2025arXiv250815538H}.
Ferroni, Matherne, and the third author then generalized the argument for the $\gamma$-positivity to posets with a nonnegative flag $h$-vector~\cite{ferroni-matherne-vecchi}.

\medskip

This paper provides a way to obtain the extended $\aaa\bbb$-index from the $\aaa\bbb$-index, proving a conjecture from~\cite{poincareextended}.
This ties together multiple of the above results, generalizes the numerical canonical decomposition of the Chow ring from~\cite{ferroni-matherne-vecchi}, and generalizes and simplifies multiple arguments in the literature, see \Cref{thm: exPsi is omega Psi} and \Cref{cor:expsiviabeta,,cor:poincaremain,,cor:ferronietall}.

\section{Definitions and main results}

Let~$P$ be a finite, graded, bounded poset of rank~$n$.
That is,~$P$ is a finite poset with unique minimum element~$\hat 0$ and unique maximum element~$\hat 1$ of rank~$n$ such that $\rank(w)$ is equal to the length of any maximal chain from~$\hat 0$ to~$w$.
Its \Dfn{Möbius function}~$\mu$ is given by $\mu(w, w) = 1$ for all $w\in P$ and $\mu(u,w) = -\sum_{u\leq v < w} \mu(u, v)$ for all $u < w$, and its \Dfn{Poincaré polynomial} is
\[
  \Poin_P(y) = \sum_{w\in P} \mu(\hat{0}, w) \cdot (-y)^{\rank(w)}\,.
\]
A (not necessarily maximal) chain $\cC = \{\cC_1 < \dots < \cC_k < \cC_{k+1}\}$ in~$P$ is an ordered set of pairwise comparable elements.
We assume throughout this paper that all chains end in the maximum element $\cC_{k+1} = \hat 1$.
The \Dfn{chain Poincaré polynomial} then is
\[
  \Poin_{P,\cC}(y)   =\prod_{i = 1}^k \Poin_{[\cC_i,\cC_{i+1}]}(y) \,,
\]
where $[\cC_i,\cC_{i+1}] \subseteq P$ denotes the interval between two consecutive elements in the chain.

\medskip

Let $\ZZ[y]\langle\aaa,\bbb\rangle$ be the polynomial ring in two noncommuting variables $\aaa,\bbb$ with coefficients being polynomials in the variable~$y$.
For a subset $S \subseteq \{0,\dots,n-1\}$, we set $\wt_S(\aaa,\bbb) = w_0\cdots w_{n-1}$ with
\begin{equation*}
  w_k = \begin{cases}
          \bbb        & \text{if } k\in S, \\
          \aaa - \bbb & \text{if } k\notin S\,.
        \end{cases}
\end{equation*}
For a chain $\cC = \{\cC_1 < \dots < \cC_k < \cC_{k+1} = \hat 1\}$, we moreover set $\wt_\cC = \wt_{\{\rank(\cC_1),\dots,\rank(\cC_k)\}}$.
The extended $\aaa\bbb$-index $\expsi_P(y,\aaa,\bbb)$ then is
\[
\expsi_P(y,\aaa,\bbb)
  = \sum_{\cC} \Poin_{P,\cC}(y) \cdot \wt_{\cC}(\aaa,\bbb) \ \in \ZZ[y]\langle\aaa,\bbb\rangle\,,
\]
where the sum ranges over all chains~$\cC = \{\cC_1 <\dots < \cC_{k+1} = \hat 1\}$.
Since~$P$ is bounded, we have $\Poin_P(0) = 1$, implying
\begin{equation}
\label{eq:abindex}
  \expsi_P(0,\aaa,\bbb)
  = \Psi_P(\aaa,\bbb)
  = \sum_{\cC} \wt_\cC(\aaa,\bbb)\,,
\end{equation}
where $\Psi_P(\aaa,\bbb)$ is (a mild variation of) the \Dfn{$\aaa\bbb$-index}  as given, for example, in~\cite{bayer-survey}.
It is sometimes convenient to only consider chains that start in the bottom element~$\hat 0$.
This is achieved by applying the $\iota$-transformation removing the initial letter from every $\aaa\bbb$-monomial,
\[
  \expsitilde_P(y,\aaa,\bbb)    = \iota\big(\expsi_P(y,\aaa,\bbb)\big)\,,\quad
  \widetilde\Psi_P(\aaa,\bbb) = \iota\big(\Psi_P(\aaa,\bbb)\big)\,.
\]

\medskip

In this paper, we prove that the Poincaré-extension of the $\aaa\bbb$-index is obtained from the $\aaa\bbb$-index via the $\omega$-transformation.
For posets admitting an $R$-labeling, this was shown in~\cite[Corollary~2.9]{poincareextended}, where also the general case was conjectured.

\begin{theorem}[{\cite[Conjecture~2.10]{poincareextended}}]
\label{thm: exPsi is omega Psi}
    Let~$P$ be a finite, graded, bounded poset.
    Then
     \begin{align*}
        \expsi_P(y,\aaa,\bbb)      = \omega\big(\Psi_P(\aaa,\bbb)\big)\,, \quad
        \expsitilde_P(y,\aaa,\bbb) = (1+y) \cdot \omega\big(\widetilde\Psi_P(\aaa,\bbb)\big)\,,
    \end{align*}
    where $\omega$ is the transformation that replaces all occurrences of $\aaa\bbb$ with $(1+y)\aaa\bbb + (y+y^2)\bbb\aaa$ and then simultaneously replaces all remaining occurrences of $\aaa$ with $\aaa+y\bbb$ and of $\bbb$ with $\bbb+y\aaa$.
\end{theorem}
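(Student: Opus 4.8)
The plan is to prove the identity $\expsi_P=\omega(\Psi_P)$ by induction on the rank $n$ and to deduce the tilde‑version from it. Writing $\mathbf{e}:=\aaa-\bbb$, the backbone of the induction is the decomposition of chains according to their minimal element $\cC_1=x$. Since the chain Poincaré polynomial $\Poin_{P,\cC}$ only records the gaps lying above $\cC_1$, a chain with $\cC_1=x$ is exactly a chain of $[x,\hat{1}]$ issued from the bottom, and bookkeeping the letters below $x$ yields
\[
  \expsi_P(y,\aaa,\bbb)=\mathbf{e}^{\,n}+\sum_{\hat{0}\le x<\hat{1}}\mathbf{e}^{\,\rank x}\,\bbb\,\expsitilde_{[x,\hat{1}]}(y,\aaa,\bbb),
\]
and likewise $\Psi_P=\mathbf{e}^{\,n}+\sum_{x}\mathbf{e}^{\,\rank x}\,\bbb\,\widetilde\Psi_{[x,\hat{1}]}$ (its $y=0$ specialization). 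Recursing through the \emph{upper} intervals via the initial‑letter‑deleted series $\expsitilde$ is what makes the scheme work: no un‑subdivided interval is left carrying a ``bare'' Poincaré factor, so all Poincaré data sits inside the $\expsitilde_{[x,\hat{1}]}$ that the inductive hypothesis governs.

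Two lemmas convert this into a statement about $\omega$. First, I would prove the operator identity $\iota\circ\omega=(1+y)\,\omega\circ\iota$ on $\ZZ[y]\langle\aaa,\bbb\rangle$; since both sides simply delete the first letter, it suffices to check it on $\aaa,\bbb,\aaa\bbb,\bbb\aaa$. Granting the theorem for posets of smaller rank, this gives $\expsitilde_Q=\iota(\omega\Psi_Q)=(1+y)\,\omega(\iota\Psi_Q)=(1+y)\,\omega(\widetilde\Psi_Q)$, so the tilde‑identity is equivalent to the main one and is available throughout the induction. Second, and crucially, I would establish the transfer lemma: reading $\omega$ as a left‑to‑right greedy parse of a word into $\aaa\bbb$‑blocks (each replaced by $(1+y)\aaa\bbb+(y+y^2)\bbb\aaa$) and free letters (each sent by $\aaa\mapsto\aaa+y\bbb,\ \bbb\mapsto\bbb+y\aaa$), one finds for every $Z$ and $r\ge 2$ the two‑term recurrence
\[
  \omega(\mathbf{e}^{\,r}Z)=(1-y)\,\mathbf{e}\,\omega(\mathbf{e}^{\,r-1}Z)+y\,\mathbf{e}^{\,2}\,\omega(\mathbf{e}^{\,r-2}Z),
\]
whose characteristic roots are $\mathbf{e}$ and $-y\,\mathbf{e}$. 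Solving it with the base cases $r=0,1$ gives the closed forms
\[
  \omega\big(\mathbf{e}^{\,r}\bbb\,V\big)=\mathbf{e}^{\,r}\big(\bbb+(-y)^{r}y\,\aaa\big)\,\omega(V),
  \qquad
  \omega\big(\mathbf{e}^{\,r}\big)=\frac{1-(-y)^{r+1}}{1+y}\,\mathbf{e}^{\,r}.
\]

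Applying $\omega$ to the $\Psi_P$‑recursion, substituting the transfer lemma and the inductive identity $(1+y)\,\omega(\widetilde\Psi_{[x,\hat{1}]})=\expsitilde_{[x,\hat{1}]}$, and subtracting the $\expsi_P$‑recursion, all the interval contributions combine and the whole difference collapses to the single \emph{boundary identity}
\[
  \sum_{\hat{0}\le x<\hat{1}}\mathbf{e}^{\,\rank x}\big((-y)^{\rank x}\aaa-\bbb\big)\,\expsitilde_{[x,\hat{1}]}(y,\aaa,\bbb)=\big(1-(-y)^{n}\big)\,\mathbf{e}^{\,n}.
\]
Proving this is the main obstacle. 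The evidence that it holds — and the route I would take — comes from its image under the algebra map $\aaa\mapsto 1,\ \bbb\mapsto 0$: under this map $\mathbf{e}\mapsto 1$ and one checks directly that $\expsitilde_Q\mapsto\Poin_Q(y)$, since the only weight monomial of $\expsitilde_Q$ surviving the substitution comes from the two‑element chain $\hat{0}<\hat{1}$. The boundary identity thus specializes to the scalar identity $\sum_{\hat{0}\le x<\hat{1}}(-y)^{\rank x}\Poin_{[x,\hat{1}]}(y)=1-(-y)^{n}$, which follows at once by expanding $\Poin_{[x,\hat{1}]}(y)=\sum_{x\le v\le\hat{1}}\mu(x,v)(-y)^{\rank v-\rank x}$, interchanging summations, and using $\sum_{\hat{0}\le x\le v}\mu(x,v)=\delta_{\hat{0},v}$. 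The remaining and genuinely delicate task is to promote this scalar cancellation to the full noncommutative identity; I expect to do so by expanding each $\expsitilde_{[x,\hat{1}]}$ as a signed chain sum via P.\ Hall's formula for $\mu$ and organizing a rank‑by‑rank Möbius cancellation in the incidence algebra of $P$, with the right‑hand side $\mathbf{e}^{\,n}$ emerging precisely as the surviving $v=\hat{0}$ and $v=\hat{1}$ terms.
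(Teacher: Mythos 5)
Your framework---induction on the rank, decomposing chains by their minimal element, the closed forms for $\omega\big((\aaa-\bbb)^k\big)$ and $\omega\big((\aaa-\bbb)^k\bbb\big)$, and the scalar identity $\sum_{x\in P}(-y)^{\rk(x)}\Poin_{[x,\one]}(y)=1$---matches the paper's, and those ingredients are all correct (your two-term recurrence and its solution reproduce the paper's Lemma~\ref{lemma: omega evaluations}, and the scalar identity is the paper's Lemma~\ref{lemma:sum poincare}). But the argument has a genuine gap exactly where you flag ``the main obstacle'': the noncommutative boundary identity
\[
\sum_{\zero\le x<\one}(\aaa-\bbb)^{\rk(x)}\big((-y)^{\rk(x)}\aaa-\bbb\big)\,\expsitilde_{[x,\one]}(y,\aaa,\bbb)=\big(1-(-y)^{n}\big)(\aaa-\bbb)^{n}
\]
is never proved. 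You verify only its image under $\aaa\mapsto 1,\ \bbb\mapsto 0$ and state that you ``expect'' to lift it by a rank-by-rank M\"obius cancellation. That lift is where essentially all the content of the theorem lives: the scalar shadow is elementary, while in the identity above the factor $(-y)^{\rk(x)}\aaa-\bbb$ sits between noncommuting words whose shape depends on $\rk(x)$, so the terms cannot simply be collected by rank. A second, related defect is that your reduction to this identity substitutes $(1+y)\,\omega\big(\widetilde\Psi_{[x,\one]}\big)=\expsitilde_{[x,\one]}$ for \emph{all} $\zero\le x<\one$; the term $x=\zero$ has $[x,\one]=P$ itself, so that instance is not supplied by the induction hypothesis on smaller rank, and as written the reduction is circular.

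Both problems are avoided in the paper by a slightly different decomposition: for each $\zero<w<\one$ one pairs the chains with minimal element $w$ with those with minimal elements $\zero<w$, which produces explicit factors $\Poin_{[\zero,w]}(y)$ and references only the \emph{proper} upper intervals $[w,\one]$. Substituting the scalar identity for $\Poin_P(y)$ and for each $\Poin_{[\zero,w]}(y)$ creates a double sum over $\zero<u<w<\one$, whose inner sum over $w$ is recognized, via the recursion for $\expsitilde_{[u,\one]}$, as $\expsitilde_{[u,\one]}-\Poin_{[u,\one]}(y)\,(\aaa-\bbb)^{\,n-1-\rk(u)}$; after cancellation this is exactly the $\omega$-image of the $\Psi_P$-recursion. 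That computation is the concrete realization of the ``promotion to the noncommutative identity'' your proposal leaves open; without it (or an actual proof of your boundary identity, plus a repair of the $x=\zero$ circularity, e.g.\ by first establishing the tilde-identity for $P$ through its own recursion in proper intervals), the proof is incomplete.
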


As the first immediate corollary, we obtain the following alternative description of the extended $\aaa\bbb$-index in terms of the \Dfn{flag $h$-vector}
\[
    \beta(T) = \sum_{S \subseteq T} (-1)^{|T \setminus S|} \alpha(S)\,,
\]
for $\alpha(S)$ being the \Dfn{flag $f$-vector} counting maximal chains in the subposet of~$P$ with only the ranks in~$S$ selected.
This in particular generalizes the nonnegativity of the extended $\aaa\bbb$-index to posets with nonnegative flag $h$-vector.

\begin{corollary}
\label{cor:expsiviabeta}
    We have
    \[
        \expsi_P (y,\aaa,\bbb) = \sum_{T \subseteq \{0,\dots,n-1\}} \beta(T) \cdot \omega(\operatorname{m}_T)\,,
    \]
    where $\operatorname{m}_T = m_0 \dots m_{n-1}$ with $m_i = \bbb$ if $i \in T$ and $m_i = \aaa$ if $i \notin T$.
    The extended $\aaa\bbb$-index has in particular nonnegative coefficients whenever the flag $h$-vector is nonnegative.
\end{corollary}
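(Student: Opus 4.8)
The plan is to re-expand the $\aaa\bbb$-index $\Psi_P$ in the monomial basis $\{\operatorname{m}_T\}$, recognize the flag $h$-vector, and then transport this expansion through $\omega$ using \Cref{thm: exPsi is omega Psi} together with the linearity of $\omega$. First I would rewrite $\Psi_P$ by grouping chains according to their rank set. Since $\wt_\cC = \wt_S$ depends only on the set $S = \{\rank(\cC_1),\dots,\rank(\cC_k)\} \subseteq \{0,\dots,n-1\}$, and the number of chains $\cC$ (ending in $\hat 1$) with this rank set is precisely the flag $f$-vector $\alpha(S)$, equation \eqref{eq:abindex} becomes
\[
  \Psi_P(\aaa,\bbb) = \sum_{S\subseteq\{0,\dots,n-1\}}\alpha(S)\,\wt_S(\aaa,\bbb)\,.
\]

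Next, expanding each factor $\aaa-\bbb$ in $\wt_S = \prod_{k\in S}\bbb\prod_{k\notin S}(\aaa-\bbb)$ amounts to recording, for every position $k\notin S$, the choice between $\aaa$ and $-\bbb$; collecting the positions where $\bbb$ is chosen into a set $T\supseteq S$ gives
\[
  \wt_S(\aaa,\bbb) = \sum_{T\supseteq S}(-1)^{|T\setminus S|}\operatorname{m}_T\,.
\]
Substituting this and exchanging the order of summation yields $\Psi_P = \sum_T\big(\sum_{S\subseteq T}(-1)^{|T\setminus S|}\alpha(S)\big)\operatorname{m}_T = \sum_T\beta(T)\operatorname{m}_T$, by the very definition of $\beta$. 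I would then invoke \Cref{thm: exPsi is omega Psi}: since $\omega$ is defined monomial-wise and extended $\ZZ[y]$-linearly, applying it term by term gives $\expsi_P = \omega(\Psi_P) = \sum_T\beta(T)\,\omega(\operatorname{m}_T)$, which is the claimed formula.

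For the nonnegativity statement it suffices to observe that each $\omega(\operatorname{m}_T)$ already has nonnegative coefficients: $\omega$ is the composition of the substitution $\aaa\bbb\mapsto(1+y)\aaa\bbb+(y+y^2)\bbb\aaa$ with the subsequent replacements $\aaa\mapsto\aaa+y\bbb$ and $\bbb\mapsto\bbb+y\aaa$, and each of these sends a word to a $\ZZ_{\ge 0}[y]$-combination of words; hence so does their composition applied to the single monomial $\operatorname{m}_T$. Consequently, if $\beta(T)\geq 0$ for all $T$, then $\expsi_P$ is a nonnegative combination of polynomials with nonnegative coefficients, and the conclusion follows.

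The main obstacle, and essentially the only point requiring genuine care, is the bookkeeping in the first step: one must respect the convention that the chains here end in $\hat 1$ but need not contain $\hat 0$, so that the rank sets range over all of $\{0,\dots,n-1\}$, and confirm that counting chains with rank set exactly $S$ agrees with the rank-selected-subposet definition of the flag $f$-vector $\alpha(S)$. Everything else is the classical passage between the flag $f$- and flag $h$-vectors, here carried out inside $\ZZ[y]\langle\aaa,\bbb\rangle$ and then pushed through the linear map $\omega$.
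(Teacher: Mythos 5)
Your proof is correct and follows essentially the same route as the paper: expand $\Psi_P(\aaa,\bbb)=\sum_T\beta(T)\operatorname{m}_T$ and push this through the ($\ZZ[y]$-linear) map $\omega$ using \Cref{thm: exPsi is omega Psi}. The only difference is that you derive the flag $f$-to-$h$ expansion of the $\aaa\bbb$-index from scratch (correctly, including the bookkeeping about chains ending in $\hat 1$), whereas the paper simply cites it from \cite[Remark~4.4]{poincareextended}; your nonnegativity argument via the positivity of $\omega(\operatorname{m}_T)$ is also the intended one.
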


This description can be used to bypass the inclusion-exclusion argument for posets admitting an $R$-labeling from~\cite[Section~4]{poincareextended}.
For a nonnegative $R$-labeling~$\lambda$, a maximal chain $\mathcal M = (\mathcal M_0<\dots<\mathcal M_n)$ in~$P$ and a set $E \subseteq \{1,\dots,n\}$, define the sequence $(\lambda_0,\lambda_1,\dots,\lambda_n)$ with $\lambda_0 = 0$ and $\lambda_i = \pm \lambda(\mathcal M_{i-1},\mathcal M_i)$ with the sign being positive if $i \not \in E$ and negative if $i \in E$.
Then set $\operatorname{m}(\mathcal M,E) = m_0\dots m_{n-1}$ to be the $\aaa\bbb$-monomial with $m_i = \bbb$ if $\lambda_{i} > \lambda_{i+1}$ and $m_i = \aaa$ if $\lambda_i \leq \lambda_{i+1}$.

\begin{corollary}[{\cite[Theorem~2.7]{poincareextended}}]
\label{cor:poincaremain}
    Let ~$P$ admit an $R$-labeling.
    Then
    \begin{align*}
        \expsi_P(y,\aaa,\bbb) &= \sum_{\mathcal M, E} y^{\#E} \cdot \operatorname{m}(\mathcal M,E)\,,
    \end{align*}
    where the sum ranges over all \emph{maximal} chains~$\mathcal M$ in~$P$ and all subsets $E \subseteq  \{1, \dots, n\}$.
\end{corollary}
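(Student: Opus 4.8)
The plan is to deduce the statement from \Cref{cor:expsiviabeta} together with the classical combinatorics of $R$-labelings, thereby replacing the inclusion--exclusion argument of \cite[Section~4]{poincareextended}. Recall from \Cref{cor:expsiviabeta} that
\[
  \expsi_P(y,\aaa,\bbb) = \sum_{T \subseteq \{0,\dots,n-1\}} \beta(T)\cdot \omega(\operatorname{m}_T).
\]
For a poset with an $R$-labeling, the Björner--Stanley theorem identifies $\beta(T)$ with the number of maximal chains $\mathcal M$ whose descent set $\operatorname{Des}(\mathcal M)$ equals $T$; here the descent set is read off with the convention $\lambda_0 = 0$, so that $\operatorname{m}_T = \operatorname{m}(\mathcal M,\varnothing)$ for every such $\mathcal M$ (note that position $0$ is never a descent, so only $0\notin T$ contributes). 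Grouping the sum by descent set therefore gives
\[
  \expsi_P(y,\aaa,\bbb) = \sum_{\mathcal M}\omega\big(\operatorname{m}(\mathcal M,\varnothing)\big),
\]
and it remains to establish, for each fixed maximal chain $\mathcal M$, the local identity $\omega(\operatorname{m}(\mathcal M,\varnothing)) = \sum_{E}y^{\#E}\operatorname{m}(\mathcal M,E)$.

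To prove this local identity I would first record how a single subset $E$ acts letter by letter. Writing $s_i \in \{+,-\}$ for the sign attached to $\lambda_i$ (so $s_0 = +$, and $s_i = -$ exactly when $i \in E$), the comparison at position $i$ depends only on the pair $(s_i,s_{i+1})$ and on whether position $i$ is an ascent or a descent of $\mathcal M$: a short case check, using that the labels are nonnegative with $\lambda_0 = 0$ and that consecutive labels along a maximal chain are distinct, shows that $(+,+)$ reproduces the original letter, $(+,-)$ forces $\bbb$, $(-,+)$ forces $\aaa$, and $(-,-)$ flips the original letter. In particular $\operatorname{m}(\mathcal M,E)$ depends only on $E$ and on the ascent--descent pattern of $\mathcal M$, which is exactly the data recorded by $\operatorname{m}(\mathcal M,\varnothing)$; this is the value-independence that the right-hand side secretly requires.

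The decisive observation is then that each letter is altered by flipping exactly one of its two neighbouring signs: the left sign $s_i$ when position $i$ is a descent, and the right sign $s_{i+1}$ when it is an ascent. Consequently a sign that sits between a descent on its left and an ascent on its right (a valley, or a trailing descent) governs no letter at all and contributes an inert factor $(1+y)$; a sign sitting between an ascent on its left and a descent on its right governs both letters of the corresponding $\aaa\bbb$ factor and, when flipped, swaps it to $\bbb\aaa$ with weight $y$; and every remaining sign privately governs a single letter outside any $\aaa\bbb$ factor, flipping it with weight $y$. Because the signs are chosen independently and each letter has a unique governing sign, the sum over all $E$ factors as a product over these three classes. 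A parity count on the ascent--descent word (which begins with an ascent, as $\lambda_0 = 0$) shows that the number of inert signs equals the number of $\aaa\bbb$ factors, so distributing one $(1+y)$ to each such factor turns the peak contribution into $(1+y)\aaa\bbb + (y+y^2)\bbb\aaa$ and the remaining single letters into $\aaa + y\bbb$ and $\bbb + y\aaa$. This is precisely the two-step recipe defining $\omega$, which proves the local identity and hence the corollary.

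The step I expect to be the main obstacle is the combinatorial heart of the last paragraph: proving that each letter has a unique governing sign so that the sum genuinely factorizes, and matching the resulting bookkeeping---in particular the equality between the number of inert signs and the number of $\aaa\bbb$ factors---to the exact coefficients $(1+y)$ and $(y+y^2)$ appearing in $\omega$. One must also keep available the distinctness of consecutive labels, since it is what rules out ties in the signed comparisons and underlies the value-independence on which the whole argument rests.
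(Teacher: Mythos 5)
Your route has the same skeleton as the paper's: the paper proves this corollary in one line by combining \Cref{thm: exPsi is omega Psi} with \cite[Proposition~5.1]{poincareextended}, and what you do is unpack that citation, i.e.\ pass through \Cref{cor:expsiviabeta}, use Bj\"orner--Stanley to turn the $\beta(T)$-sum into a sum over maximal chains, and then prove the local identity $\omega(\operatorname{m}(\mathcal M,\varnothing))=\sum_E y^{\#E}\operatorname{m}(\mathcal M,E)$ directly instead of quoting it. The added value is a self-contained argument for the quoted proposition, and its combinatorial core is sound: each letter is governed by exactly one neighbouring sign (the right one at an ascent, the left one at a descent), so the sum over $E$ factors over the signs $s_1,\dots,s_n$; and since the ascent--descent word starts with an ascent, the number of inert signs (valleys plus a possible trailing descent) equals the number of occurrences of $\aaa\bbb$, which is exactly what lets you absorb the $(1+y)$ factors into the peak terms and recover the definition of $\omega$.

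The one genuine soft spot is the case check translating $\operatorname{m}(\mathcal M,E)$ into the formal flip rule. If two consecutive labels are equal, say $\lambda_i=\lambda_{i+1}=c$ (an ascent under the convention $\lambda_i\leq\lambda_{i+1}$), then negating both gives $-c\leq -c$ and the letter stays $\aaa$ rather than flipping, so the sign $(-,-)$ does \emph{not} act as ``flip the original letter'' and the letter is no longer governed by its right sign; similarly $c=0$ breaks the $(+,-)$ case. An $R$-labeling may well have repeated consecutive labels along a maximal chain, so ``distinctness of consecutive labels'' is an extra hypothesis, not something you get for free from the statement ``$P$ admits an $R$-labeling.'' You flag this yourself, but to close the argument you must either impose or arrange that normalization of the labeling (this is precisely the bookkeeping that \cite{poincareextended} carries out and that the present paper inherits silently through the citation). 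With that caveat addressed, the factorization and parity arguments go through as you describe.
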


Observe that \Cref{cor:expsiviabeta} together with~\cite[Proposition~5.1]{poincareextended} also yields a version of this corollary for posets not admitting an $R$-labeling,
\[
    \expsi_P(y,\aaa,\bbb) = \sum_{T,E} \beta(T)\cdot y^{\#E} \cdot \operatorname{m}_T(E)\,,
\]
where $\operatorname{m}_T(E)$ is obtained from $\operatorname{m}_T = m_0\dots m_{n-1}$ by replacing $m_i = \aaa$ by $\bbb$ if $i+1 \in E$ and replacing $m_i = \bbb$ by $\aaa$ if $i \in E$, compare~\cite[Equation~(6)]{poincareextended}.
We finally also recover the $\gamma$-positivity of the (augmented) Chow polynomial if~$P$ has a nonnegative flag $h$-vector.
Recall from~\cite[Theorem~2.6]{Stump} that the (augmented) Chow polynomial of~$P$ is obtained from the extended $\aaa\bbb$-index via
\begin{align*}
    \Chowaug_P(x) &= \expsi_P(-x,1,x) \cdot (1-x)^{-n} \,, \\
    \Chow_P(x) &= \expsitilde_P(-x,1,x)\cdot(1-x)^{-n}\,.
\end{align*}
Observe that \cite{Stump} only considered posets admitting an $R$-labeling, but the argument for \cite[Theorem~2.6]{Stump} does not rely on this property and generalizes verbatim.

\begin{corollary}[{\cite[Theorem~4.20]{ferroni-matherne-vecchi}}]
\label{cor:ferronietall}
    The augmented Chow polynomial  has $\gamma$-expansion
    \begin{align*}
      \Chowaug_P(x) &= \sum_{T} \beta(T)\cdot x^{\#T} (1+x)^{n-2\#T}\,,
    \intertext{
    where the sum ranges over all \Dfn{isolated} subsets $T \subseteq \{1,\dots,n-1\}$, i.e., subsets $T$ such that $i \in T$ implies $i+1 \notin T$.
    Analogously, the Chow polynomial 
    has the $\gamma$-expansion
    }
      \Chow_P(x) &= \sum_{T} \beta(T)\cdot x^{\#T} (1+x)^{n-1-2\#T}\,,
    \end{align*}
    where the sum ranges over all isolated subsets $T \subseteq \{2,\dots,n-1\}$.
\end{corollary}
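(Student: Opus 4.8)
The plan is to substitute $y=-x$, $\aaa=1$, $\bbb=x$ into the flag $h$-vector expansion of \Cref{cor:expsiviabeta} and reduce the whole corollary to a single local computation: the value of $\omega(w)$ under this substitution for an arbitrary $\aaa\bbb$-word $w$. For the augmented statement I would begin from the identity recalled just before the corollary, writing
\[
  \Chowaug_P(x) = \expsi_P(-x,1,x)\cdot(1-x)^{-n} = (1-x)^{-n}\sum_{T\subseteq\{0,\dots,n-1\}}\beta(T)\cdot\omega(\operatorname{m}_T)\big|_{y=-x,\,\aaa=1,\,\bbb=x},
\]
so that the $\gamma$-expansion becomes a term-by-term evaluation of $\omega(\operatorname{m}_T)$.

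The heart of the argument is the following evaluation, which I would isolate as a lemma. Write $\operatorname{ev}$ for the substitution $\aaa\mapsto 1$, $\bbb\mapsto x$. After applying $\omega$ and setting $y=-x$, there are exactly three local contributions: a factor $\aaa\bbb$ absorbed in the first step of $\omega$ contributes $\operatorname{ev}\big((1+y)\aaa\bbb+(y+y^2)\bbb\aaa\big)=x(1+y)^2=x(1-x)^2$; a leftover $\aaa$ contributes $\operatorname{ev}(\aaa+y\bbb)=1+yx=(1-x)(1+x)$; and a leftover $\bbb$ contributes $\operatorname{ev}(\bbb+y\aaa)=x+y=0$. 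Since the occurrences of the factor $\aaa\bbb$ inside a word are automatically disjoint, this shows that for a word $w$ of length $\ell$ with $p$ letters $\bbb$ one has
\[
  \omega(w)\big|_{y=-x,\,\aaa=1,\,\bbb=x}=x^{p}(1-x)^{\ell}(1+x)^{\ell-2p}
\]
whenever every $\bbb$ in $w$ is immediately preceded by an $\aaa$ and hence absorbed into a peak $\aaa\bbb$, and equals $0$ otherwise, the obstruction being precisely a leftover $\bbb$, that is, a $\bbb$ in the initial position or a pair of consecutive $\bbb$'s.

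Applying this with $w=\operatorname{m}_T$, $\ell=n$, $p=\#T$ settles the augmented case at once: the surviving monomials are exactly those with $T$ containing neither two consecutive ranks nor the rank $0$, i.e. the isolated $T\subseteq\{1,\dots,n-1\}$, each contributing $\beta(T)\,x^{\#T}(1-x)^{n}(1+x)^{n-2\#T}$, which after multiplication by $(1-x)^{-n}$ is the claimed expansion. For the non-augmented polynomial I would instead use \Cref{thm: exPsi is omega Psi} in the form $\expsitilde_P=(1+y)\,\omega(\widetilde\Psi_P)$, together with $\widetilde\Psi_P=\iota(\Psi_P)$ and the expansion $\Psi_P=\sum_T\beta(T)\operatorname{m}_T$ obtained from \Cref{cor:expsiviabeta} at $y=0$ (where $\omega$ is the identity). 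Here I would invoke the vanishing $\beta(T)=0$ for $0\in T$: prepending $\hat 0$ gives a bijection between chains with rank set $S$ and with rank set $S\cup\{0\}$ for $0\notin S$, so $\alpha(S\cup\{0\})=\alpha(S)$, and a short inclusion--exclusion yields $\beta(U\cup\{0\})=-\beta(U)+\beta(U)=0$. Thus only $T\subseteq\{1,\dots,n-1\}$ survive in $\Psi_P$, each $\operatorname{m}_T$ begins with $\aaa$, and stripping that letter gives $\widetilde\Psi_P=\sum_{U\subseteq\{1,\dots,n-1\}}\beta(U)\,\iota(\operatorname{m}_U)$ with words of length $n-1$ whose leading position is the rank $1$. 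Feeding these into the lemma with $\ell=n-1$ and multiplying by the extra factor $(1+y)\big|_{y=-x}=1-x$ produces $x^{\#U}(1+x)^{n-1-2\#U}$ for the isolated $U\subseteq\{2,\dots,n-1\}$, as required.

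The main obstacle is bookkeeping rather than conceptual: I must check that the peak-matching in the first step of $\omega$ consumes each $\bbb$ exactly once and leaves only $\aaa$'s, so that the exponents $p$ and $\ell-2p$ come out right, and I must track how the index range shifts from $\{1,\dots,n-1\}$ to $\{2,\dots,n-1\}$ once $\iota$ deletes the initial letter. The only genuinely non-formal input is the identity $\beta(T)=0$ for $0\in T$, which is exactly what makes $\iota$ interact cleanly with the flag $h$-vector; everything else propagates multiplicatively from the single local computation above.
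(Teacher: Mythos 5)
Your proposal is correct and follows essentially the same route as the paper: both apply the evaluation $y\mapsto -x$, $\aaa\mapsto 1$, $\bbb\mapsto x$ term by term to the expansion of \Cref{cor:expsiviabeta} and observe that a summand survives exactly when $T$ is isolated, contributing $x^{\#T}(1-x)^n(1+x)^{n-2\#T}$. The only difference is one of exposition: the paper delegates the local computation (peaks $\aaa\bbb$ giving $x(1-x)^2$, leftover $\aaa$ giving $(1-x)(1+x)$, leftover $\bbb$ giving $0$) and the non-augmented case to cited references, whereas you carry them out explicitly, including the needed vanishing $\beta(T)=0$ for $0\in T$.
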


\begin{example}
    Consider the two posets $P$ (on the left) and $Q$ (on the right) depicted below,
    where~$P$ has the given $R$-labeling while~$Q$ does not admit an $R$-labeling.
    \begin{center}
        \begin{tikzpicture}[scale=0.9,
                baseline=(top.base)]
          \node (0) at (0,0) {$\hat{0}$};
        
          \node (1) at (-1.5,1.5) {$u_1$};
          \node (2) at (0,1.5) {$u_2$};
          \node (3) at (1.5,1.5) {$u_3$};
        
          \node (top) at (0,3) {$\hat{1}$};
        
          \draw (0)--(1)--(top) (0)--(2)--(top) (0)--(3)--(top);

          \node[red] at (-0.65,0.9){\tiny $1$};
          \node[red] at ( 0.2,0.9){\tiny $2$};
          \node[red] at ( 1.2,0.9){\tiny $3$};
          \node[red] at (-1.1,2.2){\tiny $2$};
          \node[red] at (-0.2,2.2){\tiny $1$};
          \node[red] at ( 0.55,2.2){\tiny $1$};
    
          \node (0q) at (5,0) {$\hat{0}$};
        
          \node (v1) at (4,1) {$v_1$};
          \node (v2) at (6,1) {$v_2$};
          \node (w1) at (4,2) {$w_1$};
          \node (w2) at (6,2) {$w_2$};
        
          \node (topq) at (5,3) {$\hat{1}$};
        
          \draw (0q)--(v1)--(w1)--(topq) (0q)--(v2)--(w2)--(topq);
        \end{tikzpicture}
    \end{center}
    We start with computing the extended $\aaa\bbb$-index:
    \begin{center}
        \scalebox{0.97}{
        \renewcommand{\arraystretch}{1.2}
      \begin{tabular}[t]{c|c|c}
          $\cC$ in $P$ & $\Poin_{P,\cC}(y)$ & $\wt_\cC$ \\
          \hline
          $\hat 1$ & $1$ &  $(\aaa-\bbb)^2$ \\
          $\hat 0 < \hat 1$ & $1 + 3y + 2y^2$ &  $\bbb(\aaa-\bbb)$ \\
          $u_i < \hat 1$ & $1+y$ & $(\aaa-\bbb)\bbb$ \\
          $\hat 0 < u_i < \hat{1}$ & $(1+y)^2$ & $\bbb^2$
      \end{tabular}
      \qquad
      \begin{tabular}[t]{c|c|c}
          $\cC$ in $Q$ & $\Poin_{Q,\cC}(y)$ & $\wt_\cC$ \\
          \hline
          $\hat 1$ & $1$ &  $(\aaa-\bbb)^3$ \\
          $\hat 0 < \hat 1$ & $1+2y-y^3$ &  $\bbb(\aaa-\bbb)^2$ \\
          $v_i < \hat 1$ & $1+y$ & $(\aaa-\bbb)\bbb(\aaa-\bbb)$ \\
          $\hat 0 < v_i < \hat{1}$ & $(1+y)^2$ & $\bbb^2(\aaa-\bbb)$\\
          $w_i < \hat 1$ & $1+y$ & $(\aaa-\bbb)^2\bbb$ \\
          $\hat 0 < w_i < \hat{1}$ & $(1+y)^2$ & $\bbb(\aaa-\bbb)\bbb$\\
          $v_i < w_i < \hat{1}$ & $(1+y)^2$ & $(\aaa-\bbb)\bbb^2$\\
          $\hat 0 < v_i < w_i < \hat{1}$ & $(1+y)^3$ & $\bbb^3$
      \end{tabular}
      }
    \end{center}
    We obtain
      \begin{align*}
        \expsi_P(y,\aaa,\bbb)
          &= (\aaa-\bbb)^2\! +\! (1 + 3y + 2y^2)\bbb(\aaa-\bbb)\! +\! 3\cdot (1+y)(\aaa-\bbb)\bbb\! +\! 3\cdot(1+y)^2\bbb^2 \\
          &= \aaa^2 + (3y+2y^2)\bbb\aaa + (2+3y)\aaa\bbb + y^2\bbb^2 \,, \\
        \expsitilde_P(y,\aaa,\bbb)
          &= (1+3y+2y^2)\aaa + (2+3y+y^2)\bbb \,, \\
        \Psi_P(\aaa,\bbb) &= \aaa^2 + 2\aaa\bbb \,, \\
        \widetilde\Psi_P(\aaa,\bbb) &= \aaa + 2\bbb \,, \\
        \Chowaug_P(x) &= 1 + 4x + x^2 \,, \\
        \Chow_P(x)    &= 1+x \,,
    \intertext{and}
        \expsi_Q(y,\aaa,\bbb)
          &= (\aaa-\bbb)^3\! +\! (1 + 2y - y^3)\bbb(\aaa-\bbb)^2\! +\! 2\cdot (1+y)\big( (\aaa-\bbb)\bbb(\aaa-\bbb)+(\aaa-\bbb)^2\bbb\big)\! \\
          &\qquad +\! 2\cdot(1+y)^2\big(\bbb^2(\aaa-\bbb)+\bbb(\aaa-\bbb)\bbb+(\aaa-\bbb)\bbb^2\big)\! +\! 2\cdot(1+y)^3\bbb^3  \\
          &= \aaa^{3} + (1 + 2 y) \aaa^{2}\bbb + (1 + 2 y) \aaa \bbb \aaa + (-1 + 2 y^{2}) \aaa \bbb^{2} + (2y-y^{3}) \bbb \aaa^{2} \\
          &\qquad+ (y^{2} + 2 y^{3}) \bbb \aaa \bbb + (y^{2} + 2 y^{3}) \bbb^{2}\aaa + y^{3} \bbb^{3} \,, \\
        \expsitilde_Q(y,\aaa,\bbb)
          &= (1 + 2y -y^{3}) \aaa^{2} + (1 + 2y+2y^2 + y^{3}) \aaa \bbb \\
          &\qquad+ (1 + 2 y + 2 y^2 + y^{3}) \bbb \aaa + (-1 + 2y^2 + y^{3}) \bbb^{2} \,, \\
        \Psi_Q(\aaa,\bbb) &= \aaa^{3} + \aaa^{2}\bbb + \aaa \bbb \aaa - \aaa \bbb^{2} \,, \\
        \widetilde\Psi_Q(\aaa,\bbb) &= \aaa^{2} + \aaa\bbb + \bbb \aaa - \bbb^{2} \,, \\
        \Chowaug_Q(x) &= 1 + 5x + 5x^2 + x^3 \,, \\
        \Chow_Q(x)    &= 1 + 3x + x^2 \,.
    \end{align*}

    One may also compute the extended $\aaa\bbb$-index using the flag $h$-vector.
    For the poset~$P$, we obtain
    \begin{center}
        \scalebox{0.97}{
        \renewcommand{\arraystretch}{1.2}
      \begin{tabular}[t]{c|c|c|c|c}
          $T\subseteq \{0,1\}$ & $\alpha(T)$ & $\beta(T)$ & $\operatorname{m}_T$ & $\omega (\operatorname{m}_T )$\\
          \hline
          $\emptyset$ & $1$ & $1$ & $\aaa^2$ & $(\aaa+y\bbb)^2$ \\
          $\{1\}$ & $3$ & $2$ & $\aaa\bbb$   & $(1+y)\aaa\bbb + (y+y^2)\bbb\aaa$ \\
      \end{tabular}
      }
    \end{center}

    \noindent where we ignored the sets $T \ni 0$ as $\beta_P(T) = 0$ in this case.
    For the poset $Q$, we obtain
    \begin{center}
        \scalebox{0.97}{
        \renewcommand{\arraystretch}{1.2}
      \begin{tabular}[t]{c|c|c|c|c}
          $T\subseteq \{0,1,2\}$ & $\alpha(T)$ & $\beta(T)$ & $\operatorname{m}_T$ & $\omega (\operatorname{m}_T )$\\
          \hline
          $\emptyset$ & $1$ & $1$ & $\aaa^3$ & $(\aaa+y\bbb)^3$ \\
          $\{1\}$ & $2$ & $1$ & $\aaa\bbb\aaa$   & $(1+y)\aaa\bbb\aaa + (y+y^2)(\bbb\aaa^2 + \aaa\bbb^2)+(y^2+y^3)\bbb\aaa\bbb$ \\
          $\{2\}$ & $2$ & $1$ & $\aaa^2\bbb$   & $(1+y)\aaa^2\bbb + (y+y^2)(\aaa\bbb\aaa + \bbb\aaa\bbb)+(y^2+y^3)\bbb^2\aaa$ \\
          $\{1,2\}$ & $2$ & $-1$ & $\aaa\bbb^2$  & $(1 + y) \aaa \bbb^{2} + (y + y^2) (\aaa \bbb \aaa + \bbb \aaa \bbb )+ (y^{2} + y^{3}) \bbb \aaa^{2}  $\\
      \end{tabular}
      }
    \end{center}
    Applying \Cref{cor:expsiviabeta} returns the extended $\aaa\bbb$-index as computed before.
    For the $R$-labeling~$\lambda$ of~$P$ given in {\color{red} red}, the $\aaa\bbb$-monomials $\operatorname{m}(\mathcal{M},E)$ used in \Cref{cor:poincaremain} are
    \begin{center}
    \renewcommand{\arraystretch}{1.2}
    \begin{tabular}{c|cccc|cccc}
    \multirow{2}{*}{$\mathcal{M}$} & \multicolumn{4}{c}{$\lambda(\mathcal{M}, E)$} & \multicolumn{4}{c}{$\operatorname{m}(\mathcal{M}, E)$} \\
        & $\emptyset$   &  $\{1\}$  &  $\{2\}$  & $\{1,2\}$ 
        & $\emptyset$   &  $\{1\}$  &  $\{2\}$  & $\{1,2\}$  \\ \hline
    $\hat 0 < u_1 < \hat 1$ 
        & $(0,1,2)$ &  $(0,-1,2)$  &  $(0,1,-2)$  & $(0,-1,-2)$ 
        & $\aaa\aaa$ &  $\bbb\aaa$  &  $\aaa\bbb$  & $\bbb\bbb$ \\ \hline
    $\hat 0 < u_2 < \hat 1$ 
        & $(0,2,1)$ & $(0,-2,1)$  & $(0,2,-1)$ & $(0,-2,-1)$ 
        & $\aaa\bbb$ & $\bbb\aaa$  & $\aaa\bbb$ & $\bbb\aaa$ \\ \hline
    $\hat 0 < u_3 < \hat 1$ 
        & $(0,3,1)$ & $(0,-3,1)$  & $(0,3,-1)$ & $(0,-3,-1)$ 
        & $\aaa\bbb$ & $\bbb\aaa$  & $\aaa\bbb$ & $\bbb\aaa$ 
    \end{tabular}
    \end{center}
    We leave the remaining computations of the extended $\aaa\bbb$-indices and of the (augmented) Chow polynomials to the reader.
\end{example}

\section{Proofs}

Throughout this section, we assume the poset to not be trivial, i.e., having rank $n \geq 1$.
We start with the following recursive structure of the extended $\aaa\bbb$-index.
\begin{proposition}
\label{lemma:recursion expsi}
    We have
    \begin{align*}
    \expsi_P(y,\aaa,\bbb) &= (\aaa-\bbb)^{n} + \Poin_P(y)\cdot \bbb (\aaa-\bbb)^{n-1} \\[5pt]
             &\qquad + \sum_{\zero < w < \one} \big( (\aaa-\bbb)^{\rk(w)} + \Poin_{[\zero,w]}(y) \cdot \bbb (\aaa-\bbb)^{\rk(w) -1} \big) \bbb \cdot \expsitilde_{[w,\one]}(y,\aaa,\bbb) \,, \\[15pt]
    \expsitilde_P(y,\aaa,\bbb) &= \Poin_P(y)\cdot (\aaa-\bbb)^{n-1} \\[5pt]
                  &\qquad + \sum_{\zero < w < \one}\Poin_{[\zero,w]}(y)\cdot(\aaa-\bbb)^{\rk(w) -1}\bbb \cdot \expsitilde_{[w,\one]}(y,\aaa,\bbb) \,.
    \end{align*}
\end{proposition}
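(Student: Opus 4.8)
The plan is to prove the first identity by directly partitioning the chains appearing in the definition of $\expsi_P$, and then to deduce the second identity by applying the linear map $\iota$. Every chain $\cC = \{\cC_1 < \dots < \cC_{k+1} = \one\}$ contains at least one element of positive rank, namely $\one$ itself; I would let $w$ be the smallest $\cC_j$ with $\rk(\cC_j) \geq 1$. Since $\zero$ is the unique element of rank $0$, the part of $\cC$ lying strictly below $w$ is either empty or equal to $\{\zero\}$. I would first peel off the two degenerate cases where $w = \one$: these are exactly $\cC = \{\one\}$, contributing $(\aaa-\bbb)^{n}$ with trivial chain Poincaré polynomial, and $\cC = \{\zero < \one\}$, contributing $\Poin_P(y)\,\bbb(\aaa-\bbb)^{n-1}$. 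These match the first two summands of the claimed formula.

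Next I would treat the generic case $\zero < w < \one$. For fixed such $w$ I split the remaining chains into those with $\cC_1 = w$ and those with $\cC_1 = \zero$, $\cC_2 = w$, and in each case factor the weight $\wt_\cC = w_0\cdots w_{n-1}$ across position $\rk(w)$. The positions $0,\dots,\rk(w)-1$ produce $(\aaa-\bbb)^{\rk(w)}$ when $\zero\notin\cC$ and $\bbb(\aaa-\bbb)^{\rk(w)-1}$ when $\zero\in\cC$; position $\rk(w)$ always contributes a single $\bbb$ because $w\in\cC$; and positions $\rk(w)+1,\dots,n-1$ are governed by the part $\cC'$ of $\cC$ lying in $[w,\one]$. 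Simultaneously the chain Poincaré polynomial factors, the bottom gap $[\zero,w]$ contributing $\Poin_{[\zero,w]}(y)$ precisely when $\zero\in\cC$, and the remaining gaps contributing $\Poin_{[w,\one],\cC'}(y)$. The crux is to recognize, after the rank shift $\rk_{[w,\one]}(v) = \rk(v)-\rk(w)$ realigns positions, that summing $\Poin_{[w,\one],\cC'}(y)$ times the top weight over all chains $\cC'$ in $[w,\one]$ that start at $w$ yields exactly $\expsitilde_{[w,\one]}(y,\aaa,\bbb)$; here I use that passing to the tilde retains precisely the chains through the bottom element and strips the leading $\bbb$, which is why that $\bbb$ reappears explicitly as the middle factor. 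Summing the two families and then over all $w$ reproduces the stated summation, completing the first identity.

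To obtain the tilde recursion I would simply apply $\iota$ to the first identity. Since $\iota$ is linear and deletes the leading letter of each $\aaa\bbb$-monomial, it annihilates every expression of the form $(\aaa-\bbb)\cdot X$ and sends $\bbb\cdot X\mapsto X$. Applied termwise, it kills $(\aaa-\bbb)^{n}$ and kills the $(\aaa-\bbb)^{\rk(w)}\bbb\,\expsitilde_{[w,\one]}$ part of each summand, because $\rk(w)\geq 1$ forces a leading $\aaa-\bbb$; it turns $\Poin_P(y)\,\bbb(\aaa-\bbb)^{n-1}$ into $\Poin_P(y)(\aaa-\bbb)^{n-1}$ and turns $\Poin_{[\zero,w]}(y)\,\bbb(\aaa-\bbb)^{\rk(w)-1}\bbb\,\expsitilde_{[w,\one]}$ into $\Poin_{[\zero,w]}(y)(\aaa-\bbb)^{\rk(w)-1}\bbb\,\expsitilde_{[w,\one]}$, which is exactly the second identity.

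The main obstacle is the position-versus-rank bookkeeping across $w$, together with the precise identification of the upper factor as $\expsitilde_{[w,\one]}$ rather than $\expsi_{[w,\one]}$. In particular one must verify that the single letter $\bbb$ sitting at position $\rk(w)$ is accounted for exactly once, and is neither absorbed into the bottom factor nor double-counted inside the tilde factor; once this alignment is pinned down the rest is a routine regrouping of the defining sum.
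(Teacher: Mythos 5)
Your proposal is correct and follows essentially the same route as the paper: partition the chains according to whether they start at $\one$, at $\zero<\one$, at some $w$ with $\zero<w<\one$, or at $\zero<w$, identify the upper portion of the sum with $\expsitilde_{[w,\one]}$ (the tilde accounting for the mandatory $\bbb$ at position $\rk(w)$), and obtain the second identity from the first via $\iota$. Your write-up is in fact more detailed than the paper's two-line proof, and the bookkeeping you flag as the main obstacle is handled correctly.
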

\begin{proof}
    The first two summands in the formula for $\expsi_P(y,\aaa,\bbb)$ come from the chain $\cC = \{\hat 1\}$ and from the chain $\cC = \{\hat 0 < \hat 1\}$.
    The third sum then sums over all elements $\hat 0 < w < \hat 1$, and combines the chains starting in~$w$ and starting in~$\hat 0 < w$.
    The formula for $\expsitilde_P(y,\aaa,\bbb)$ is analogous, where we observe that only the chains starting in~$\hat 0$ are present, and with the first letter removed.
\end{proof}

Now evaluating $y\mapsto 0$ yields the analogous recursive structure for the $\aaa\bbb$-index.

\begin{corollary}
\label{lemma: recursive Psi}
    We have
    \begin{align*}
        \Psi_P(\aaa,\bbb) 
        &= \aaa (\aaa - \bbb)^{n-1} + \sum_{\zero < w < \one} \aaa (\aaa-\bbb)^{\rk(w) -1}\bbb \cdot \widetilde{\Psi}_{[w,\one]}(\aaa,\bbb) \,, \\[15pt]
        \widetilde\Psi_P(\aaa,\bbb) 
        &= (\aaa - \bbb)^{n-1} + \sum_{\zero < w < \one} (\aaa-\bbb)^{\rk(w) -1}\bbb \cdot \widetilde{\Psi}_{[w,\one]}(\aaa,\bbb) \,.
    \end{align*}
\end{corollary}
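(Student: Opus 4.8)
The plan is to obtain the recursion for the $\aaa\bbb$-index as a direct specialization of the recursion for the extended $\aaa\bbb$-index at $y = 0$, using \Cref{lemma:recursion expsi} together with the normalization $\Poin_P(0) = 1$ noted before~\eqref{eq:abindex}. The key observation is that every Poincaré factor appearing in \Cref{lemma:recursion expsi} evaluates to $1$ at $y = 0$, so the substitution $y \mapsto 0$ collapses the coefficients cleanly.

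First I would set $y = 0$ in the formula for $\expsitilde_P(y,\aaa,\bbb)$. By definition $\expsitilde_P(0,\aaa,\bbb) = \widetilde\Psi_P(\aaa,\bbb)$, and likewise every interval Poincaré polynomial satisfies $\Poin_{[\zero,w]}(0) = 1$ since each interval $[\zero,w]$ is itself a finite, graded, bounded poset. Substituting these into
\[
  \expsitilde_P(y,\aaa,\bbb) = \Poin_P(y)\cdot (\aaa-\bbb)^{n-1} + \sum_{\zero < w < \one}\Poin_{[\zero,w]}(y)\cdot(\aaa-\bbb)^{\rk(w) -1}\bbb \cdot \expsitilde_{[w,\one]}(y,\aaa,\bbb)
\]
and using $\expsitilde_{[w,\one]}(0,\aaa,\bbb) = \widetilde\Psi_{[w,\one]}(\aaa,\bbb)$ yields the second displayed identity directly. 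Here it is worth noting that since all the Poincaré factors become $1$, no cancellation or reorganization is needed; the recursion for $\widetilde\Psi$ is simply the $y=0$ shadow of the one for $\expsitilde$.

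For the first identity I would similarly set $y = 0$ in the formula for $\expsi_P(y,\aaa,\bbb)$. The first two summands $(\aaa-\bbb)^n + \Poin_P(y)\cdot\bbb(\aaa-\bbb)^{n-1}$ become $(\aaa-\bbb)^n + \bbb(\aaa-\bbb)^{n-1} = \aaa(\aaa-\bbb)^{n-1}$, which is exactly the first term of the target formula for $\Psi_P$. For the sum, each bracketed factor $(\aaa-\bbb)^{\rk(w)} + \Poin_{[\zero,w]}(y)\cdot\bbb(\aaa-\bbb)^{\rk(w)-1}$ specializes at $y=0$ to $(\aaa-\bbb)^{\rk(w)} + \bbb(\aaa-\bbb)^{\rk(w)-1} = \aaa(\aaa-\bbb)^{\rk(w)-1}$, again by the same telescoping $(\aaa-\bbb)+\bbb = \aaa$. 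Multiplying through by $\bbb\cdot\expsitilde_{[w,\one]}(0,\aaa,\bbb) = \bbb\cdot\widetilde\Psi_{[w,\one]}(\aaa,\bbb)$ reproduces the summand in the recursion for $\Psi_P$.

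The only mildly delicate point, rather than a genuine obstacle, is the repeated use of the algebraic identity $(\aaa-\bbb)^k + \bbb(\aaa-\bbb)^{k-1} = \aaa(\aaa-\bbb)^{k-1}$ in the noncommutative ring $\ZZ[y]\langle\aaa,\bbb\rangle$: one must factor out $(\aaa-\bbb)^{k-1}$ \emph{on the right} and combine $\aaa-\bbb$ with $\bbb$ on the left, which is valid precisely because both terms share the common right factor. Apart from that, the proof is a clean evaluation, and indeed the excerpt already signals exactly this by stating ``Now evaluating $y\mapsto 0$ yields the analogous recursive structure.''
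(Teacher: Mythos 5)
Your proposal is correct and is exactly the paper's argument: the corollary is obtained by evaluating $y\mapsto 0$ in \Cref{lemma:recursion expsi}, using $\Poin_{[\zero,w]}(0)=1$ and the right-factorization $(\aaa-\bbb)^{k}+\bbb(\aaa-\bbb)^{k-1}=\aaa(\aaa-\bbb)^{k-1}$. The paper leaves these routine verifications implicit, so your write-up just makes the same one-line specialization explicit.
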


We prove \Cref{thm: exPsi is omega Psi} recursively by applying $\omega$ to the recursive structure in \Cref{lemma: recursive Psi} and show that it agrees with the recursive structure in \Cref{lemma:recursion expsi}.

\begin{proposition}
\label{prop:omegapsi}
    We have
    \begin{align}
        \omega\big(\Psi_P(\aaa,\bbb)\big) &= (\aaa-(-y)^n\bbb)(\aaa-\bbb)^{n-1} 
    \label{eq:omegapsi}\\
    &\ + \sum_{\zero < w < \one}
    \big( \aaa(\aaa-\bbb)^{\rk(w) -1} \bbb - (-y)^{\rk(w)}\bbb (\aaa-\bbb)^{\rk(w) -1} \aaa \big) (1+y) \omega\big(\widetilde\Psi_{[w,\one]}(\aaa,\bbb)\big) \notag
    \intertext{and}
        \omega\big(\widetilde\Psi_P(\aaa,\bbb)\big) &= 
        \tfrac{1-(-y)^n}{1+y} \cdot (\aaa - \bbb)^{n-1} \label{eq:omegapsitilde}\\
        &\quad + \sum_{\zero < w < \one}
    (\aaa - \bbb)^{\rk(w) -1} (\bbb- (-y)^{\rk(w)}\aaa) \cdot \omega\big(\widetilde\Psi_{[w,\one]}(\aaa,\bbb)\big) \,. \notag
    \end{align}
\end{proposition}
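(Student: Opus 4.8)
The plan is to apply the (linear) map $\omega$ directly to the recursion for $\Psi_P$ and $\widetilde\Psi_P$ in \Cref{lemma: recursive Psi} and to match the outcome term by term with \eqref{eq:omegapsi} and \eqref{eq:omegapsitilde}. Since every summand of that recursion has the shape (explicit prefix)$\,\cdot\,\widetilde\Psi_{[w,\one]}$, the work splits into two independent pieces: a \emph{multiplicativity rule} that pulls $\omega$ past the factor $\widetilde\Psi_{[w,\one]}$, and a short list of closed-form evaluations of $\omega$ on the prefixes. No induction on the poset is needed here; that is what drives \Cref{thm: exPsi is omega Psi}.

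First I would record how $\omega$ interacts with concatenation. Write $\omega=\phi\circ R$, where $R$ performs the replacement $\aaa\bbb\mapsto(1+y)\aaa\bbb+(y+y^2)\bbb\aaa$ on the (pairwise non-overlapping) occurrences of the factor $\aaa\bbb$ in a word and freezes the two letters it produces, while $\phi$ substitutes $\aaa\mapsto\aaa+y\bbb$ and $\bbb\mapsto\bbb+y\aaa$ on all remaining letters. One then checks the \textbf{product rule}: for words $U,V$ one has $\omega(UV)=\omega(U)\,\omega(V)$ unless $U$ ends in $\aaa$ and $V$ begins with $\bbb$, which is the only way a new $\aaa\bbb$ occurrence can straddle the cut. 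In particular, if every monomial of $U$ ends in $\bbb$ the factorization holds against any $V$. As the prefixes $\aaa(\aaa-\bbb)^{\rk(w)-1}\bbb$ and $(\aaa-\bbb)^{\rk(w)-1}\bbb$ end in $\bbb$, this gives $\omega\big(X\cdot\widetilde\Psi_{[w,\one]}\big)=\omega(X)\cdot\omega\big(\widetilde\Psi_{[w,\one]}\big)$ for each such prefix $X$, so the factors $\omega\big(\widetilde\Psi_{[w,\one]}\big)$ appear on the right-hand side exactly as claimed.

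It then remains to evaluate $\omega$ on the four prefix types. Writing $d=\aaa-\bbb$, I claim
\begin{align*}
  \omega(d^m) &= \tfrac{1-(-y)^{m+1}}{1+y}\,d^m, &
  \omega(\aaa\, d^m) &= \big(\aaa-(-y)^{m+1}\bbb\big)d^m,\\
  \omega(d^m\,\bbb) &= d^m\big(\bbb-(-y)^{m+1}\aaa\big), &
  \omega(\aaa\, d^m\,\bbb) &= (1+y)\big(\aaa\, d^m\,\bbb-(-y)^{m+1}\bbb\, d^m\,\aaa\big).
\end{align*}
Taking $m=n-1$ in the first two and $m=\rk(w)-1$ in the last two yields precisely the four kinds of terms in \eqref{eq:omegapsi} and \eqref{eq:omegapsitilde}, so the proposition follows once these identities hold. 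To prove them I would peel off the leftmost factor: from $d^m=\aaa\,d^{m-1}-\bbb\,d^{m-1}$ and $\aaa\,d^m=\aaa\aaa\,d^{m-1}-\aaa\bbb\,d^{m-1}$ the product rule produces the coupled recursions
\begin{align*}
  \omega(d^m) &= \omega(\aaa\,d^{m-1})-(\bbb+y\aaa)\,\omega(d^{m-1}),\\
  \omega(\aaa\, d^m) &= (\aaa+y\bbb)\,\omega(\aaa\, d^{m-1})-\omega(\aaa\bbb)\,\omega(d^{m-1}),
\end{align*}
with $\omega(\aaa\bbb)=(1+y)\aaa\bbb+(y+y^2)\bbb\aaa$, and the identical pair of recursions holds verbatim for the right-bordered family $\omega(d^m\bbb),\,\omega(\aaa\,d^m\bbb)$. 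A routine induction on $m$, starting from $\omega(1)=1$, $\omega(\aaa)=\aaa+y\bbb$, $\omega(\bbb)=\bbb+y\aaa$, and $\omega(\aaa\bbb)$ above, then verifies all four closed forms at once; the only arithmetic input is $1+y(-y)^m=1-(-y)^{m+1}$. (Alternatively, the third identity follows from the second via the anti-automorphism $\sigma$ reversing words and swapping $\aaa\leftrightarrow\bbb$, which one checks commutes with $\omega$ and sends $d\mapsto-d$ and $\aaa\,d^m\mapsto(-1)^m d^m\bbb$.)

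The main obstacle is the bookkeeping forced by the fact that $\omega$ is \emph{not} an algebra homomorphism: the block step $R$ detects $\aaa\bbb$ factors that may appear at the junction of two concatenated words. Getting the product rule exactly right — isolating the single straddling case and confirming that the frozen letters are inert under $\phi$ — is what makes both the pull-past of $\widetilde\Psi_{[w,\one]}$ and the inductive step for the four identities go through; everything else is linear algebra in $\ZZ[y]\langle\aaa,\bbb\rangle$.
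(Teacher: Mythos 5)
Your proposal is correct and follows essentially the same route as the paper: the four closed-form evaluations you list are exactly the paper's Lemma~\ref{lemma: omega evaluations}, proved there too by induction on the exponent using the multiplicativity of $\omega$ across cuts where no $\aaa\bbb$ straddles, and the proposition is then obtained by applying $\omega$ termwise to the recursion of Corollary~\ref{lemma: recursive Psi}. The only (cosmetic) differences are that you peel off the leftmost rather than the rightmost letter in the induction and that you state the product rule explicitly, which the paper leaves implicit.
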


This proposition follows directly from the evaluations of the $\omega$-transformation in \Cref{lemma: omega evaluations} below.
Before proceeding, we relate both evaluations to the \emph{numerical canonical decomposition} from~\cite{ferroni-matherne-vecchi}.

\begin{remark}
\label{rem:omegaeval}
    For $f \in \ZZ\langle\aaa,\bbb\rangle$, write $\omega_{\operatorname{ev}}(f) = \omega(f)\big|_{y \mapsto -x,\ \aaa\mapsto 1,\ \bbb\mapsto x} \in \ZZ[x]$ to be the evaluation from~\cite[Theorem~2.6]{Stump}.
    Together with \Cref{thm: exPsi is omega Psi}, we may write
    \[
    (1-x)^n\cdot\Chow_P(x) = \expsitilde_P(-x,1,x) = (1-x)\cdot\omega_{\operatorname{ev}}\big(\widetilde\Psi_P(\aaa,\bbb)\big)\,,
    \]
    where we used the identity 
    \begin{align}
        \iota \circ \omega = (1+y) \cdot \omega\circ \iota \label{eq:omegaiota}
    \end{align}
    which is immediate from the definition.
    Applying the same specialization to  \Cref{prop:omegapsi}\eqref{eq:omegapsitilde} lets us recover the \emph{numerical canonical decomposition} from \cite[Theorem~3.9(15)]{ferroni-matherne-vecchi},
    \begin{align*}
        &(1-x)^{n-1}\Chow_P(x) =\omega_{\operatorname{ev}}\big(\widetilde\Psi_P(\aaa,\bbb)\big)\\[5pt]
        &\qquad= \frac{1-x^n}{1-x}(1-x)^{n-1} + \sum_{\zero < w < \one} (1-x)^{\rk(w) -1}(x-x^{\rk(w)})\cdot (1-x)^{n-\rk(w) - 1}\Chow_{[w,\one]}(x) \\
        &\qquad= (1-x)^{n-1}\left[ \frac{1-x^{n}}{1-x} + \sum_{\zero< w < \one}\frac{x-x^{\rk(w)}}{1-x}\Chow_{[w,\one]}(x) \right].
    \end{align*}
    Similarly, \Cref{prop:omegapsi}\eqref{eq:omegapsi} lets us write
    \[
    \Chowaug_P(x) = \frac{1-x^{n+1}}{1-x} + \sum_{\zero < w < \one}\frac{x-x^{\rk(w)+1}}{1-x}\Chow_{[w,\one]}(x).
    \]
\end{remark}

\begin{lemma}
\label{lemma: omega evaluations}
    We have
    \begin{align*}
        \omega((\aaa-\bbb)^k) &= \tfrac{1-(-y)^{k+1}}{1+y} \ (\aaa-\bbb)^k \,, \\[5pt]
        \omega((\aaa-\bbb)^k \bbb) &= (\aaa-\bbb)^k(\bbb-(-y)^{k+1}\aaa) \,, \\[5pt]
        \omega(\aaa(\aaa-\bbb)^k) &= (\aaa-(-y)^{k+1} \bbb) (\aaa -\bbb)^k \,, \\[5pt]
        \omega(\aaa(\aaa-\bbb)^k \bbb) &= (1+y) \big(\aaa(\aaa -\bbb)^k\bbb -(-y)^{k+1} \bbb (\aaa -\bbb)^k \aaa \big) \,.
    \end{align*}
\end{lemma}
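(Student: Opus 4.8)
The plan is to reduce the statement to a short induction on $k$, after first making precise how $\omega$ acts on a single monomial. The key structural fact is that $\omega$ is \emph{multiplicative over blocks}: in a monomial $w = w_1\cdots w_m$ the positions $i$ with $w_i = \aaa$ and $w_{i+1} = \bbb$ mark pairwise disjoint factors $\aaa\bbb$ (two such factors can never share a position, since that position would have to be both $\bbb$ and $\aaa$), and $\omega(w)$ is the left-to-right concatenation, block by block, of $(1+y)\aaa\bbb + (y+y^2)\bbb\aaa$ for each marked factor, of $\aaa + y\bbb$ for each unmarked $\aaa$, and of $\bbb + y\aaa$ for each unmarked $\bbb$. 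From this I would read off three left-boundary rules, valid for every $u \in \ZZ[y]\langle\aaa,\bbb\rangle$ (extended linearly from monomials): first, $\omega(\bbb u) = (\bbb+y\aaa)\,\omega(u)$, since a leading $\bbb$ can never begin a factor; second, $\omega(\aaa\bbb\, u) = \big((1+y)\aaa\bbb + (y+y^2)\bbb\aaa\big)\omega(u)$, since the leading $\aaa\bbb$ is a self-contained block whose trailing $\bbb$ meets $u$ without creating a new factor; and third, $\omega(\aaa u) = (\aaa + y\bbb)\,\omega(u)$ whenever $u$ is a combination of monomials each beginning with $\aaa$ (or empty), for then the leading $\aaa$ is followed by an $\aaa$ and remains a singleton.

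Armed with these rules I would peel off the leftmost $(\aaa-\bbb)$ factor. Writing $P_k = \omega((\aaa-\bbb)^k)$, $R_k = \omega(\aaa(\aaa-\bbb)^k)$, $Q_k = \omega((\aaa-\bbb)^k\bbb)$ and $S_k = \omega(\aaa(\aaa-\bbb)^k\bbb)$, splitting $(\aaa-\bbb)^k = \aaa(\aaa-\bbb)^{k-1} - \bbb(\aaa-\bbb)^{k-1}$ and applying the three rules yields, for $k \ge 1$, the coupled recursions
\begin{align*}
P_k &= R_{k-1} - (\bbb+y\aaa)P_{k-1}, & R_k &= (\aaa+y\bbb)R_{k-1} - \big((1+y)\aaa\bbb+(y+y^2)\bbb\aaa\big)P_{k-1}, \\
Q_k &= S_{k-1} - (\bbb+y\aaa)Q_{k-1}, & S_k &= (\aaa+y\bbb)S_{k-1} - \big((1+y)\aaa\bbb+(y+y^2)\bbb\aaa\big)Q_{k-1}.
\end{align*}
The two pairs obey the same recursion, differing only in their initial data $P_0 = 1$, $R_0 = \aaa+y\bbb$ and $Q_0 = \bbb+y\aaa$, $S_0 = (1+y)(\aaa\bbb+y\bbb\aaa)$, each of which agrees with the claimed formula at $k=0$.

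The induction step is then pure bookkeeping. Assuming all four formulas at $k-1$, I substitute them into the right-hand sides and check equality with the claimed values at $k$. In each identity every term carries a common copy of $X := (\aaa-\bbb)^{k-1}$ (as a right factor for $P,R$, and sandwiched between one or two letters for $Q,S$); since the words $uXv$ arising here are distinguished by their leading and trailing letters, their monomial supports are disjoint and hence they are linearly independent, so it suffices to compare the coefficients of the finitely many resulting basis words. The only arithmetic input is the identity $(1+y)\cdot\tfrac{1-(-y)^{k}}{1+y} = 1-(-y)^{k}$ together with $y(-y)^{k} = -(-y)^{k+1}$, which makes all four coefficient comparisons collapse to tautologies; I have checked that they do.

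The main obstacle is the first paragraph rather than the computation: one must justify the block-multiplicativity of $\omega$ and the three boundary rules carefully, i.e.\ interpret the two-step definition of $\omega$ precisely (which letters count as ``remaining'' after the first step) and confirm that the $\aaa\bbb$-factors of a word are canonically determined, pairwise disjoint, and never merge across the seam created by prepending a single letter or an $\aaa\bbb$. Once this locality is in place, the recursions and the coefficient matching are routine.
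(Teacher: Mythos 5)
Your proposal is correct and takes essentially the same approach as the paper: both arguments rest on the block-locality (multiplicativity across non-interacting factors) of $\omega$ together with an induction on $k$, and your initial data and coupled recursions for $P_k,Q_k,R_k,S_k$ check out. The only cosmetic difference is that you peel off the leftmost $(\aaa-\bbb)$ and prove all four identities simultaneously, whereas the paper peels from the right to get the first two and then deduces the last two via $\aaa(\aaa-\bbb)^k=\bbb(\aaa-\bbb)^k+(\aaa-\bbb)^{k+1}$.
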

\begin{proof}
    We start with the first two identities by induction on~$k$.
    For $k=1$, we have
    \begin{align*}
        \omega\big(\aaa -\bbb\big) &= \omega(\aaa) - \omega(\bbb)= (1-y)(\aaa-\bbb) \\
        \omega\big((\aaa -\bbb)\bbb \big) \,,
        &= \omega(\aaa\bbb) -\omega(\bbb\bbb)
        = (\aaa - \bbb) (\bbb - y^2 \aaa) \,.
    \end{align*}
    Assuming the first two identities for~$k \geq 1$, we obtain
    \begin{align*}
        \omega\big((\aaa - \bbb)^{k+1}\big) 
        &= \omega\big((\aaa - \bbb)^k \aaa\big) - \omega\big((\aaa - \bbb)^k \bbb\big) \\
        &= \omega\big((\aaa - \bbb)^k\big) \cdot \omega(\aaa) - \omega\big((\aaa - \bbb)^k\bbb\big) \\
        &= \tfrac{1 - (-y)^k}{1 + y}(\aaa - \bbb)^k (\aaa + y \bbb) - (\aaa - \bbb)^k ( \bbb - (-y)^{k+1} \aaa) \\
        &= \tfrac{1 - (-y)^{k+2}}{1 + y}(\aaa - \bbb)^{k+1}  \\
        \intertext{and}
        \omega\big((\aaa - \bbb)^{k+1} \bbb\big)
        &= \omega((\aaa - \bbb)^k \aaa \bbb) - \omega((\aaa - \bbb)^k \bbb \bbb) \\
        &= \omega((\aaa - \bbb)^k) \cdot \omega(\aaa \bbb) - \omega((\aaa - \bbb)^k\bbb) \cdot \omega(\bbb) \\
        &= (\aaa - \bbb)^k (1 - (-y)^{k+1}) \big( \aaa \bbb + y\bbb \aaa \big) - (\aaa - \bbb)^k (\bbb - (-y)^{k+1} \aaa)(\bbb + y \aaa) \\
        &= (\aaa - \bbb)^{k+1} \big( \bbb - (-y)^{k+2} \aaa \big) \,.
    \intertext{
    Here, we used the induction hypotheses in the third line of each equation.
    We finally deduce the third and forth identities using the first and second identity:
    }
        \omega\big(\aaa (\aaa-\bbb)^k\big)
        &= \omega(\bbb) \omega((\aaa-\bbb)^k) +\omega((\aaa-\bbb)^{k+1})\\
        &= \tfrac{1}{1+y} \bigg( \bbb(\aaa-\bbb)^k(1-(-y)^{k+1}) +\aaa(\aaa-\bbb)^k(y+(-y)^{k+2})\\
           &\qquad \qquad + \aaa(\aaa-\bbb)^k (1-(-y)^{k+2}) - \bbb(\aaa-\bbb)^k(1-(-y)^{k+2}) \bigg) \\
        &= (\aaa-(-y)^{k+1} \bbb) (\aaa-\bbb)^k 
    \intertext{and}
        \omega\big(\aaa (\aaa-\bbb)^k \bbb\big)
        &= \omega(\bbb) \cdot \omega\big((\aaa-\bbb)^k\bbb\big) +\omega\big((\aaa-\bbb)^{k+1}\bbb\big)\\
        &= (1+y) \aaa(\aaa - \bbb)^k\bbb - (1+y) (-y)^{k+1} \bbb(\aaa-\bbb)^k \aaa \,. \qedhere
    \end{align*}
\end{proof}

\begin{proof}[Proof of \Cref{prop:omegapsi}]
    \Cref{prop:omegapsi}\eqref{eq:omegapsitilde} follows from the first two identities in \Cref{lemma: omega evaluations}, and~\eqref{eq:omegapsi} from the latter two.
\end{proof}

The key ingredient for proving \Cref{thm: exPsi is omega Psi} is the following identity, which is a standard identity in the incidence algebra.

\begin{lemma}
\label{lemma:sum poincare}
    \[\sum_{w \in P} (-y)^{\rk(w)} \Poin_{[w,\one]}(y) = 1\, .\]
    In particular,
    \[
    \Poin_P(y) = 1 - (-y)^n - \sum_{\zero < w < \one}(-y)^{\rk(w)}\Poin_{[w,\one]}(y)\, . 
    \]
\end{lemma}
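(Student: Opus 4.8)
The plan is to expand each interval Poincaré polynomial by its definition as a Möbius sum, interchange the order of summation, and reduce everything to a single Möbius orthogonality relation. Concretely, for the interval $[w,\one]$, viewed as a graded bounded poset with minimum $w$, the rank of an element $v$ equals $\rk(v)-\rk(w)$, so
\[
  \Poin_{[w,\one]}(y) = \sum_{w \le v \le \one} \mu(w,v)\,(-y)^{\rk(v)-\rk(w)} .
\]
Since $\one$ is the maximum, the constraint $v \le \one$ is vacuous, and when this is substituted into the left-hand side of the claimed identity the factor $(-y)^{\rk(w)}$ cancels against $(-y)^{-\rk(w)}$.

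Next I would collect the resulting double sum over all comparable pairs $w \le v$ and interchange the summation order so that $v$ is the outer variable:
\[
  \sum_{w \in P} (-y)^{\rk(w)}\Poin_{[w,\one]}(y)
  = \sum_{w \le v} \mu(w,v)\,(-y)^{\rk(v)}
  = \sum_{v \in P} (-y)^{\rk(v)} \sum_{\zero \le w \le v} \mu(w,v) .
\]
The inner sum is precisely the Möbius orthogonality with the \emph{first} argument varying, $\sum_{\zero \le w \le v}\mu(w,v) = [\,v = \zero\,]$; this is the standard relation expressing that $\mu$ is the two-sided inverse of the zeta function in the incidence algebra of $P$. Hence only $v = \zero$ survives, contributing $(-y)^{\rk(\zero)} = (-y)^0 = 1$, which establishes the first identity.

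For the ``in particular'' statement I would simply isolate in $\sum_{w}(-y)^{\rk(w)}\Poin_{[w,\one]}(y) = 1$ the two extreme terms: $w = \zero$ gives $\Poin_{[\zero,\one]}(y) = \Poin_P(y)$, while $w = \one$ gives $(-y)^{n}\cdot\Poin_{[\one,\one]}(y) = (-y)^n$. Transposing these leaves exactly $\Poin_P(y) = 1 - (-y)^n - \sum_{\zero < w < \one}(-y)^{\rk(w)}\Poin_{[w,\one]}(y)$.

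The computation is routine, and the only point that warrants care is the direction of the Möbius relation invoked. The recursion defining $\mu$ in the paper fixes the first argument and sums over the second, yielding $\sum_{\zero \le v \le w}\mu(\zero,v) = [\,w = \zero\,]$, whereas here I need the dual identity with the first argument summed. I would justify this by the standard fact that in the incidence algebra $\zeta$ is invertible, so its one-sided inverse $\mu$ is automatically two-sided and both orthogonality relations hold; this is the ``standard identity in the incidence algebra'' referred to in the statement.
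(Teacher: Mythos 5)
Your proof is correct, and it is essentially the paper's argument carried out explicitly: the paper cites the known identity $\sum_{w}\chi_{[w,\one]}(y)=y^n$ (itself proved by Möbius inversion) and converts via $\Poin_P(y)=(-y)^n\chi_P(-\tfrac{1}{y})$, whereas you perform the same Möbius-orthogonality computation directly on the Poincaré polynomials. Your care about needing the relation $\sum_{\zero\le w\le v}\mu(w,v)=[v=\zero]$ (summing over the \emph{first} argument, i.e., $\zeta*\mu=\delta$ rather than the defining $\mu*\zeta=\delta$) is well placed and correctly resolved by the two-sided invertibility of $\zeta$ in the incidence algebra.
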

\begin{proof}
    It is known that $\sum_w \chi_{[w,\one]}(y) = y^n$, this can be proven for example by applying the M\"obius inversion formula.
    The statement then follows with $\Poin_P(y) = (-y)^n \chi_P(-\tfrac{1}{y})$.
\end{proof}
\begin{proof}[Proof of Theorem \ref{thm: exPsi is omega Psi}]
    We first use \Cref{lemma:sum poincare} in the recursive definition of $\expsi_P(y,\aaa,\bbb)$ from \Cref{lemma:recursion expsi}.
    \begin{align*}
    \expsi_P (y,\aaa,\bbb)
    &= \ (\aaa - \bbb)^n + \Poin_P(y) \cdot \bbb (\aaa - \bbb)^{n-1} \\
        &\qquad+ \sum_{\zero < w < \one} 
        (\aaa - \bbb)^{\rk(w)} \cdot \bbb \cdot \expsitilde_{[w,\one]}(y,\aaa,\bbb) \\
        &\qquad + \sum_{\zero < w < \one} 
        \Poin_{[\zero,w]}(y) \cdot \bbb (\aaa - \bbb)^{\rk(w)-1} \cdot \bbb \cdot \expsitilde_{[w,\one]}(y,\aaa,\bbb) \\
        &= (\aaa - \bbb)^n + (1 - (-y)^n) \cdot \bbb(\aaa - \bbb)^{n-1} \\
        &\qquad- \sum_{0 < w <\one} (-y)^{\rk(w)} \Poin_{[w,\one]}(y) \cdot \bbb (\aaa - \bbb)^{n-1} \\
    &\qquad + \sum_{0 < w <\one} (\aaa - \bbb)^{\rk(w)}\cdot \bbb \cdot \expsitilde_{[w,\one]}(y,\aaa,\bbb) \\
    &\qquad+ \sum_{0 < w <\one} \bbb (\aaa - \bbb)^{\rk(w)-1}\cdot \bbb \cdot \expsitilde_{[w,\one]}(y,\aaa,\bbb) \\
    &\qquad - \sum_{0 < w <\one} (-y)^{\rk(w)} \bbb (\aaa - \bbb)^{\rk(w)-1}\cdot \bbb \cdot \expsitilde_{[w,\one]}(y,\aaa,\bbb) \\
    &\qquad - \sum_{0 < u < w <\one} (-y)^{\rk(u)} \Poin_{[u,w]}(y)\cdot \bbb (\aaa - \bbb)^{\rk(w)-1}\cdot \bbb \cdot \expsitilde_{[w,\one]}(y,\aaa,\bbb) \,.
    \end{align*}
    The final sum may be rewritten as
    \begin{align*}
    &\sum_{0 < u < w <\one}(-y)^{\rk(u)} \Poin_{[u,w]}(y) \cdot\bbb (\aaa - \bbb)^{\rk(w)-1}\cdot \bbb \cdot \expsitilde_{[w,\one]}(y,\aaa,\bbb)\\
        &\quad= \ \sum_{0 < u < \one} (-y)^{\rk(u)} \cdot \bbb (\aaa - \bbb)^{\rk(u)} 
            \!\! \sum_{u < w < 1} \Poin_{[u,w]}(y) \cdot (\aaa - \bbb)^{\rk(w) - \rk(u) -1} \bbb \cdot \expsitilde_{[w,\one]}(y,\aaa,\bbb)\\
        &\quad= \ \sum_{0 < u < \one} (-y)^{\rk(u)}\cdot \bbb (\aaa - \bbb)^{\rk(u)} 
            \big( \expsitilde_{[u,1]}(y,\aaa,\bbb) - \Poin_{[u,1]}(y) \cdot (\aaa-\bbb)^{n-1-\rk(u)} \big) \,.
    \end{align*}
    A few more term orderings and cancellations give
    \begin{align*}
    \expsi_P(y,\aaa,\bbb) 
    &= \ \aaa(\aaa - \bbb)^{n-1} - (-y)^n \cdot \bbb(\aaa - \bbb)^{n-1} \\
    &\qquad - \sum_{0 <w < \one} (-y)^{\rk(w)} \Poin_{[w,\one]}(y) \cdot \bbb(\aaa - \bbb)^{n-1} \\
    &\qquad + \sum_{0 < w <\one} \aaa(\aaa - \bbb)^{\rk(w)-1}\cdot \bbb \cdot \expsitilde_{[w,\one]}(y,\aaa,\bbb) \\
    &\qquad - \sum_{0 < w <\one} (-y)^{\rk(w)} \cdot \bbb (\aaa - \bbb)^{\rk(w)-1}\cdot \bbb \cdot \expsitilde_{[w,\one]}(y,\aaa,\bbb)\\
    &\qquad - \sum_{0 < u < \one} (-y)^{\rk(u)} \cdot \bbb (\aaa - \bbb)^{\rk(u) -1}\cdot (\aaa - \bbb) \cdot \expsitilde_{[u,1]}(y,\aaa,\bbb) \\
    &\qquad + \sum_{0 < u < \one} (-y)^{\rk(u)} \Poin_{[u,\one]}(y)\cdot \bbb (\aaa - \bbb)^{n-1}\\
    &= (\aaa- (-y)^n \bbb) (\aaa - \bbb)^{n-1} \\
    &\qquad + \sum_{0 < w <\one} \aaa(\aaa - \bbb)^{\rk(w)-1}\cdot \bbb \cdot \expsitilde_{[w,\one]}(y,\aaa,\bbb) \\
    &\qquad- \sum_{0 < u < \one} (-y)^{\rk(u)} \cdot \bbb (\aaa - \bbb)^{\rk(u) -1}\cdot \aaa \cdot \expsitilde_{[u,1]}(y,\aaa,\bbb) \\
    &= (\aaa-(-y)^n\bbb)(\aaa-\bbb)^{n-1} \\
    &\qquad +  \sum_{\zero < w < \one} 
        \big( \aaa(\aaa-\bbb)^{\rk(w) -1} \bbb - (-y)^{\rk(w)}  \bbb (\aaa-\bbb)^{\rk(w) -1} \aaa \big)  \cdot 
        \expsitilde_{[w,\one]}(y,\aaa,\bbb) \,.
\end{align*}
By induction on the rank of the poset, we may assume
\[
  \expsitilde_{[w,\one]}(y,\aaa,\bbb) = (1+y) \cdot \omega\big( \Psi_{[w,\one]}(\aaa,\bbb)\big)\,,
\]
which is clearly satisfied if the poset~$P$ has rank~$1$.
We have therefore obtained the right hand side of \eqref{eq:omegapsi}, which proves that 
\[
 \expsi_P(y, \aaa,\bbb) = \omega\big( \Psi_P(\aaa,\bbb )\big) \,.
\]
Applying $\iota$ on both sides finally gives
\[
    \expsitilde_P(y, \aaa,\bbb) = \iota \circ \omega\big( \Psi_P(\aaa,\bbb )\big)
     = (1+y) \cdot \omega\big( \widetilde \Psi_P(\aaa,\bbb )\big) \,,
\]
where we used $\iota \circ \omega = (1+y) \cdot \omega\circ \iota$ from~\eqref{eq:omegaiota}.
\end{proof}

We conclude with the proofs of the corollaries.

\begin{proof}[Proof of \Cref{cor:expsiviabeta}]
    The $\aaa\bbb$-index has the expansion
    \[
        \Psi_P(\aaa,\bbb) = \sum_{T \subseteq \{0,\dots,n-1\}} \beta(T)\cdot \operatorname{m}(T)\,,
    \]
    see e.g.~\cite[Remark~4.4]{poincareextended}.
    The statement follows from the linearity of the $\omega$-trans\-for\-mation.
\end{proof}

\begin{proof}[Proof of \Cref{cor:poincaremain}]
    This follows immediately from \Cref{thm: exPsi is omega Psi} using~\cite[Proposition~5.1]{poincareextended}.
\end{proof}

\begin{proof}[Proof of \Cref{cor:ferronietall}]
    Applying $\omega_{\operatorname{ev}}$, as defined in \Cref{rem:omegaeval}, to the expression in \Cref{cor:expsiviabeta} yields the conclusion.
    Here, we observe that the summand corresponding to $T$ evaluates to~$0$ if $T$ is not isolated and to $x^{\#T}(1+x)^{n-2\#T}$ otherwise.
    This completes the proof for the augmented Chow polynomial, and we refer to the proof of \cite[Theorems~1.1 \& 1.2]{Stump} for more details on this evaluation.
    The statement for the Chow polynomial is completely analogous.
\end{proof}

\includegraphics[width=0.1px, keepaspectratio]{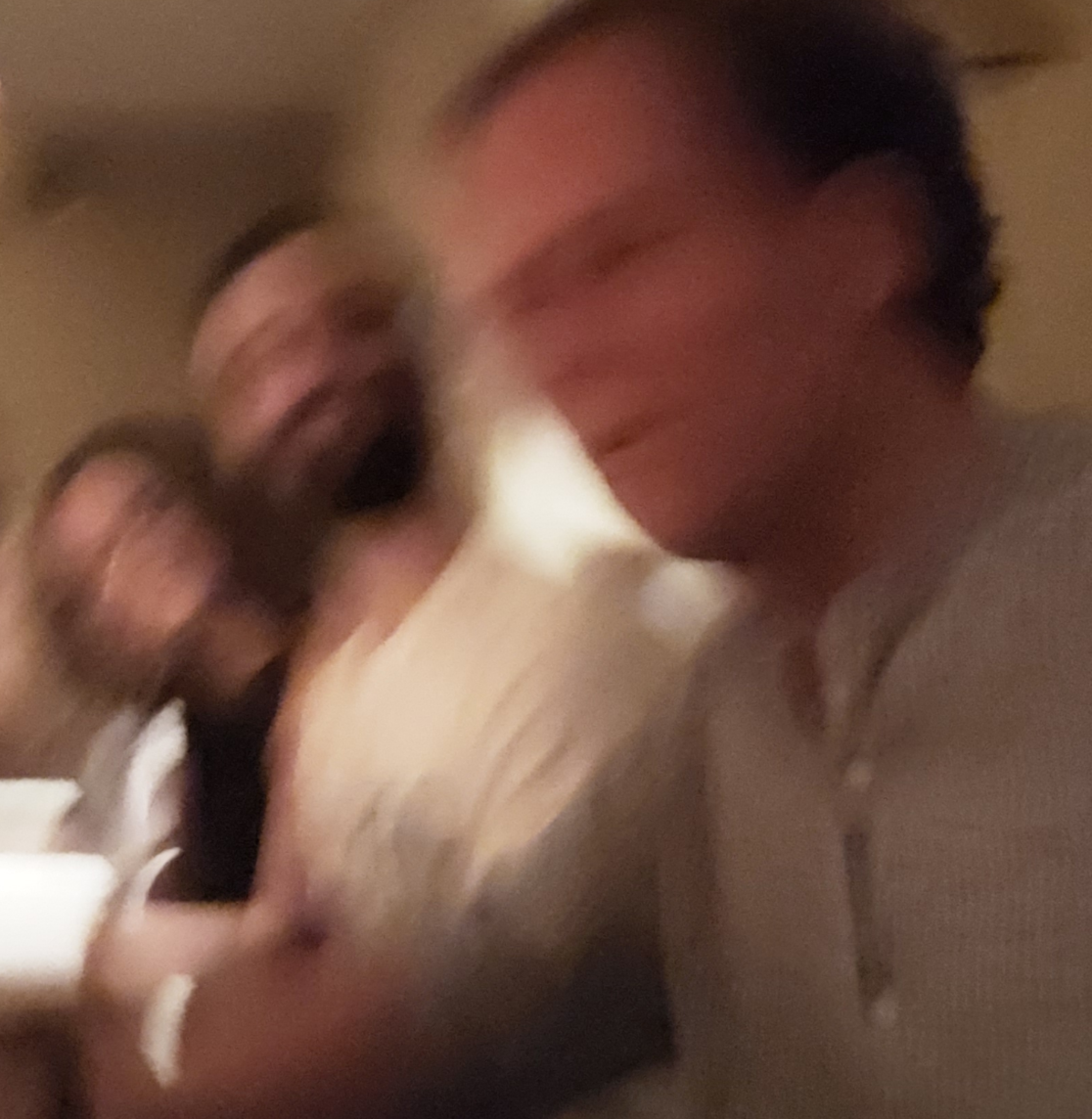}

\section*{Acknowledgements}

The authors would like to thank Galen Dorpalen-Barry and Josh Maglione for many interesting and enlightening discussions about extending the $\aaa\bbb$-index.

\bibliographystyle{alpha}
\bibliography{bibliography}

\end{document}